\def\th@plain{%
  \upshape 
}
\renewenvironment{proof}[1][\proofname]{\par
  \pushQED{\qed}%
  \normalfont \topsep6\p@\@plus6\p@\relax
  \trivlist
  \item[\hskip\labelsep
        \bfseries
    #1\@addpunct{.}]\ignorespaces
}{%
  \popQED\endtrivlist\@endpefalse
}
\newtheorem{thm}{Theorem}
\newtheorem{lem}[thm]{Lemma}
\numberwithin{equation}{section}
\numberwithin{equation}{section}
\newcommand{\gx}{G^{\times}}
\begin{document}
\title{On total colorings of 1-planar graphs\footnotetext{Email addresses: xzhang@xidian.edu.cn~(X.\,Zhang), jfhou@fzu.edu.cn~(J.\,Hou), gzliu@sdu.edu.cn~(G.\,Liu)}\thanks{This research is mainly supported by the Fundamental Research Funds for the Central Universities (No.\,K5051370003), and is partially supported by the NSFC grants 11001055, 11101243, 11201440, 61070230 and the NSFFP grants 2010J05004, 2011J06001.}}

\author{Xin Zhang$^{1}$,\;\;Jianfeng Hou$^2$,\;\;Guizhen Liu$^3$\\[.5em]
{\small $^1$ Department of Mathematics, Xidian University}\\[-.1em]
{\small Xi'an, 710071, P. R. China}\\[-.1em]
{\small $^2$ Center for Discrete Mathematics, Fuzhou University}\\[-.1em]
{\small Fuzhou, 350002, P. R. China}\\[-.1em]
{\small $^3$ School of Mathematics, Shandong University}\\[-.1em]
{\small Jinan, 250100, P. R. China}\\[-.1em]
}
\date{}
\maketitle

\begin{abstract}\baselineskip 0.60cm
A graph is $1$-planar if it can be drawn on the plane so that each edge is crossed by at most one other edge. In this paper,
we confirm the total-coloring conjecture for 1-planar graphs with maximum degree at least 13.\\[.5em]
\textbf{Keywords}: 1-planar graph, total coloring, discharging method\\[.5em]
\end{abstract}
\baselineskip 0.60cm

\section{Introduction}

All graphs considered in this paper are finite, simple and undirected. We use $V(G)$, $E(G)$, $\delta(G)$ and $\Delta(G)$ to denote the vertex set, the edge set, the minimum degree and the maximum degree of a graph $G$, respectively. For a vertex $v\in V(G)$, $N_G(v)$ denotes the set of vertices that are adjacent to $v$ in $G$. By $d_G(v):=|N_G(v)|$, we denote the degree of $v$ in $G$. For a plane graph $G$, $F(G)$ denotes its face set and $d_G(f)$ denotes the degree of a face $f$ in $G$. Throughout this paper, a $k$-, $k^+$- and $k^-$-vertex (resp. face) is a vertex (resp. face) of degree $k$, at least $k$ and at most $k$. Any undefined notation follows that of Bondy and Murty \cite{Bondy}.

Given a graph $G$ and a positive integer $k$, a \emph{total $k$-coloring} of $G$ is a mapping from $V(G)\cup E(G)$ to $\{1,2,\cdots,k\}$ such that $f(x)\neq f(y)$ for every pair of adjacent or incident elements $x,y\in V(G)\cup E(G)$. The \emph{total chromatic number} $\chi''(G)$ of a graph $G$ is the least number of colors needed in any total coloring of $G$. It is clear that $\chi''(G)\geq \Delta(G)+1$. The next step is to look for any Brooks-typed or Vizing-typed upper bound on the total chromatic number in terms of maximum degree. However, to obtain such bounds turns out to be a difficult problem and has eluded mathematicians for nearly fifty years. The most well-known speculation is the \emph{total-coloring conjecture}, independently raised by Behzad \cite{Behzad.1965} and Vizing \cite{Vizing.1968}, which asserts that every graph of maximum degree $\Delta$ admits a total $(\Delta+2)$-coloring. The validity of this conjecture is known to be true for graphs in several wide families. Rosenfeld \cite{Rosenfeld.1971} and Vijayaditya \cite{Vijayaditya.1971} confirmed it for $\Delta\leq 3$, Kostochka solved it for $\Delta=4$ \cite{Kostochka.1977} and $\Delta=5$ \cite{Kostochka.1996}. For $\Delta\geq 6$ it remains open even for planar graphs, but more is known. Borodin \cite{Borodin.1989} confirmed the total-coloring conjecture for planar graphs with $\Delta\geq 9$. Yap \cite{Yap.1989} proved it for planar graphs with $\Delta\geq 8$. The $\Delta=7$ case was solved for planar graphs by Sanders and Zhao \cite{Sanders.1999}.

A graph is \emph{$1$-planar} if it can be drawn on the plane so that each edge is crossed by at most one other edge. The notion of $1$-planar graphs was introduced by Ringel \cite{Ringel.1965} while trying to simultaneously color the vertices and faces of a plane graph $G$ such that any pair of adjacent/incident elements receive different colors.
Various colorings including vertex coloring \cite{Ringel.1965,Borodin.1984,Borodin.1995}, list vertex coloring \cite{Albertson.2006,WL08}, acyclic vertex coloring \cite{Borodin.2001}, edge coloring \cite{Zhang.C5,Zhang.C4,Zhang.2010.SDU,Zhang.2011}, acyclic edge coloring \cite{Zhang.BKMS}, list edge and list total coloring\cite{list}, $(p,1)$-total labelling \cite{Zhang.CEJM} and the linear arboricity \cite{Zhang.LA} of 1-planar graphs have been extensively studied in the literature. In particular, Zhang, Wu and Liu \cite{list} proved that every 1-planar graph with maximum degree $\Delta\geq 16$ is $(\Delta+2)$-total choosable, which implies that the total-coloring conjecture holds for 1-planar graphs with maximum degree at least 16. In this paper, we improve the lower bound for the maximum degree in the above corollary to 13 by the following theorem.
\begin{thm}\label{mainthm}
Let $G$ be a $1$-planar graph with maximum degree $\Delta$ and let $r$ be an integer. If $\Delta\leq r$ and $r\geq 13$, then $\chi''(G)\leq r+2$.
\end{thm}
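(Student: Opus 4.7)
The plan is to argue by contradiction via a minimum counterexample. Let $G$ be a $1$-planar graph with $\Delta(G)\leq r$, $r\geq 13$, and $\chi''(G)>r+2$, chosen so that $|V(G)|+|E(G)|$ is minimum. Then every proper subgraph $G'$ of $G$ satisfies $\Delta(G')\leq r$ and hence, by minimality, admits a total $(r+2)$-coloring. Fix a $1$-planar drawing of $G$ and, as is standard, associate to it the plane graph $\gx$ obtained by turning each crossing point into a new vertex of degree $4$. All structural analysis and discharging will be carried out on $\gx$, but the coloring arguments act on $G$ itself.

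The first phase is to establish a list of unavoidable reducible configurations. The template is: identify a small-degree vertex $v$ or a light edge $uv$ in $G$, delete one or more carefully chosen edges, extract a total $(r+2)$-coloring $\phi$ of the remaining graph from the minimality hypothesis, and show that $\phi$ extends to $G$. At each uncolored element $x$ one counts the number of colors already forbidden by incidences and adjacencies; because $r+2\geq 15$, there is slack, and when the naive count fails one resorts to Kempe-type swaps or to choosing the deleted edge adaptively based on the local picture around $v$ or around a crossing. The configurations so excluded should say, roughly: every vertex of small degree has many neighbors of near-maximum degree; every edge $uv$ with small $d_G(u)+d_G(v)$ is forbidden; and the endpoints of two crossing edges cannot be simultaneously of small degree.

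The second phase is a discharging argument on $\gx$. Assign initial charge $\mu(x)=d_{\gx}(x)-4$ to each $x\in V(\gx)$ and $\mu(f)=d_{\gx}(f)-4$ to each $f\in F(\gx)$, so that by Euler's formula
\[
\sum_{x\in V(\gx)}\mu(x)+\sum_{f\in F(\gx)}\mu(f)=-8<0.
\]
Design local discharging rules that move charge from high-degree vertices and large faces to low-degree vertices, crossing vertices, and short faces. The configurations established in the first phase are then invoked to verify that the final charge of every vertex and every face of $\gx$ is non-negative, contradicting the negativity of the total charge.

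The main obstacle, and the reason the bound drops from $16$ in~\cite{list} to $13$, is that both halves of the argument have to be tightened at once. The reducible configurations must be proved under weaker degree hypotheses, which forces the colour-extension step to control tighter counts (and, in several subcases, to iterate recolorings guided by the particular pair of edges meeting at a crossing of $\gx$ incident to the configuration). Simultaneously, the discharging rules must be finely tuned at crossing vertices and at the short faces they create, making essential use of the fact that, once endpoint degrees are large enough, $1$-planarity severely restricts the faces of $\gx$ that can border a crossing. Verifying that every one of the many resulting case distinctions ends with non-negative final charge is where the bulk of the work lies.
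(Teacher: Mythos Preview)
Your outline is the right genre---minimum counterexample plus discharging on $\gx$---and in that sense matches the paper. But as written it is only a template; the two ideas that actually drive the bound down to $r\ge 13$ are absent, and without them the discharging does not close.

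First, your list of reducible configurations stops at ``light edges'' and ``small crossing endpoints''. The paper needs, in addition, that a $3$-vertex lies in no triangle, that no edge at a $4$-vertex lies in two triangles, and---crucially---a global counting lemma: $|V_\Delta|>2|V_3|$ (proved by showing the $V_3$--$V_\Delta$ bipartite subgraph is a forest). This last statement is not a local reducibility fact and does not fall out of your Kempe-swap scheme.

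Second, and tied to this, your discharging is purely local (``move charge from high-degree vertices and large faces to \ldots''). The paper's proof is not: it uses a \emph{common pot} rule in which every $\Delta$-vertex deposits $\tfrac12$ and every $3$-vertex withdraws $1$, balanced exactly by the inequality $|V_\Delta|>2|V_3|$. In each of the $3$-vertex subcases this global $+1$ is indispensable; local transfers from adjacent faces and true neighbours in $\gx$ do not suffice when a $3$-vertex has two or three false neighbours. Your plan gives no mechanism for this.

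Third, the paper singles out ``an unusual approach to estimate the final charges of big vertices'': the star of faces around a vertex $v$ with $d_{\gx}(v)\ge 8$ is decomposed into five cluster types, the worst-case outflow per cluster is tabulated, and the total is bounded by a small linear program in the cluster multiplicities $n_1,\dots,n_5$. Your proposal simply asserts that ``verifying every case ends with non-negative final charge is where the bulk of the work lies''; that is true, but the cluster/LP bookkeeping is the device that makes it feasible, and it is not something one would stumble on by default.

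A minor point: you choose charges $d-4$ on both vertices and faces (total $-8$), whereas the paper uses $d-6$ on vertices and $2d-6$ on faces (total $-12$). Either normalisation can be made to work in principle, but with yours every $3$-face is a sink, which multiplies the casework at crossings; the paper's choice makes $3$-faces neutral and concentrates the deficit on $5^-$-vertices, which meshes better with the reducible configurations above.
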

During the proof the Theorem \ref{mainthm}, we use the discharging method, and in particular, we involve an unusual approach to estimate the final charges of big vertices. This can be seen from Section 3.

\section{Structural Properties of a minimal 1-planar graph}

Let an \emph{$r$-minimal graph} be a connected graph $G$ on the fewest edges that has no total $(r+2)$-colorings. In the following lemmas, we always assume that $r\geq 13$.

\begin{lem}\label{degre-sum}
Let $G$ be a $r$-minimal graph and let $uv$ be an edge in $G$. If $d_G(u)\leq \lfloor\frac{r}{2}\rfloor$, then $d_G(u)+d_G(v)\geq r+3$.
\end{lem}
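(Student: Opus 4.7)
The plan is to argue by contradiction using the minimality of $G$. Suppose, towards a contradiction, that $d_G(u)\leq\lfloor r/2\rfloor$ and $d_G(u)+d_G(v)\leq r+2$. Consider $G':=G-uv$: it has fewer edges than $G$, so by minimality each connected component of $G'$ admits a total $(r+2)$-coloring, and combining these yields a total $(r+2)$-coloring $\phi$ of $G'$ itself. I will extend $\phi$ to a total $(r+2)$-coloring of $G$, contradicting the fact that $G$ has no such coloring.

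The key idea is to first erase the color $\phi(u)$, and then greedily color the edge $uv$ followed by the vertex $u$. Once $u$ is uncolored, the colors forbidden for $uv$ come only from the $d_G(u)-1$ edges incident to $u$ other than $uv$, the $d_G(v)-1$ edges incident to $v$ other than $uv$, and the color of $v$; this amounts to at most $d_G(u)+d_G(v)-1\leq r+1$ forbidden colors, so some color among the $r+2$ available can be assigned to $uv$.

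To then recolor $u$, note that its forbidden palette consists of the colors of its $d_G(u)$ neighbors together with the colors on its $d_G(u)$ incident edges (including the one just given to $uv$). This totals at most $2d_G(u)\leq 2\lfloor r/2\rfloor\leq r$ colors, which leaves at least two colors free for $u$. Hence the extension succeeds and produces a total $(r+2)$-coloring of $G$, the desired contradiction.

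I do not anticipate any real obstacle here: the hypothesis $d_G(u)\leq\lfloor r/2\rfloor$ is tailored precisely so that after uncoloring $u$ there is enough slack to color both $uv$ and $u$ in turn. The only points that require care are bookkeeping—counting the color of $v$ separately from the edges incident to $v$ when bounding the constraints on $uv$, and remembering to include the freshly assigned color of $uv$ among the forbidden colors for $u$.
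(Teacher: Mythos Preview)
Your proof is correct and follows essentially the same argument as the paper: delete $uv$, color $G-uv$ by minimality, uncolor $u$, then greedily color $uv$ (at most $r+1$ constraints) and finally $u$ (at most $2d_G(u)\leq r$ constraints). Your bookkeeping is in fact slightly more explicit than the paper's, but the method is identical.
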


\begin{proof}
Suppose, to the contrary, that $d_G(u)+d_G(v)\leq r+2$. Since $G$ is $r$-minimal, the graph $G'=G-uv$ has a total $(r+2)$-coloring $\varphi$. First of all, erase the color of $u$ from $\varphi$. Since $d_{G'}(u)+d_{G'}(v)\leq \Delta(G)+2-2=\Delta(G)\leq r$, the uncolored edge $uv$ is incident with at most $r$ colored edges and one colored vertex, thus we can properly color $uv$ with a color involved in $\varphi$. At last, the vertex $u$ can be easily colored since it is incident with at most $2d_G(u)\leq r$ colors.
\end{proof}

\begin{lem}\label{3-vertex}
Let $G$ be a $r$-minimal graph and let $v$ be a vertex of $G$. If $d_G(v)=3$, then $v$ cannot be contained in a triangle.
\end{lem}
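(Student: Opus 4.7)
The plan is to derive a contradiction from the assumption that some $3$-vertex $v$ of an $r$-minimal graph $G$ lies on a triangle; write $N_G(v) = \{u, w, x\}$ with $uw \in E(G)$. Since $d_G(v) = 3 \leq \lfloor r/2 \rfloor$ for $r \geq 13$, Lemma~\ref{degre-sum} applied to each of the edges $vu$, $vw$, $vx$ gives $d_G(u), d_G(w), d_G(x) \geq r$, and combined with $\Delta(G) \leq r$ this forces $d_G(u) = d_G(w) = d_G(x) = r$.

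Invoking minimality, let $\varphi$ be a total $(r+2)$-coloring of $G' := G - vu$. I plan to extend $\varphi$ to $G$ by first erasing the colors of $v$, $vw$ and $vx$, then assigning colors to $vu$, $vw$, $vx$ that are pairwise distinct and respect the colors currently surviving at $u$, $w$, $x$, and finally coloring $v$ itself, which is trivial since at most six colors are then forbidden at $v$ while $r + 2 \geq 15$. Writing $M_u, M_w, M_x \subseteq \{1, \ldots, r+2\}$ for the sets of colors missing at $u$, $w$, $x$ under the resulting partial coloring, each has size exactly $r + 2 - r = 2$, and the whole extension reduces to finding a system of distinct representatives (SDR) for $(M_u, M_w, M_x)$. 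A short case check shows that three $2$-sets admit an SDR precisely when they are not all equal, so the argument concludes immediately whenever $M_u$, $M_w$, $M_x$ do not all coincide.

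The sole obstacle is the exceptional coincidence $M_u = M_w = M_x = \{a, b\}$. To dispose of it I would exploit the triangle edge $uw$: writing $\beta := c(uw)$, we have $\beta \notin \{a, b\}$ (since $\beta$ appears at both $u$ and $w$), and the color $a$ is legal on $uw$ because $a$ is absent from the palettes of both $u$ and $w$. Recoloring $c(uw) := a$ changes both $M_u$ and $M_w$ to $\{b, \beta\}$ while leaving $M_x = \{a, b\}$ intact; the three sets are no longer all equal (since $\beta \neq a$), so an SDR such as $c(vu) = \beta$, $c(vw) = b$, $c(vx) = a$ is available, $v$ is finished as described, and the resulting total $(r+2)$-coloring of $G$ contradicts the minimality of $G$. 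This perturbation step is the delicate part of the argument: no direct counting defeats the case $M_u = M_w = M_x$, and the saving idea is that the triangle edge $uw$ permits a single local swap with a color missing at its endpoints, decoupling $M_u$ and $M_w$ from $M_x$.
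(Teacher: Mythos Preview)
Your proof is correct, and it takes a somewhat different route from the paper's. The paper deletes only one edge $vv_3$, keeps $vv_1,vv_2$ colored as $1,2$, and argues directly that every colour $\geq 3$ must already sit at $v_3$ (otherwise the extension is immediate); this forces $\{1,2\}$ to be absent at $v_3$, and a symmetric swap shows $\{1,2\}$ is absent at $v_2$ as well, after which recolouring the triangle edge $v_2v_3$ with $1$ frees its old colour for $vv_3$. You instead strip all three edges at $v$, record the three size-$2$ lists of missing colours at $u,w,x$, and reduce the extension to finding a system of distinct representatives, with the single bad case $M_u=M_w=M_x$ handled by the same recolouring of the triangle edge $uw$.

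Both arguments hinge on the same structural fact---the triangle edge lets you swap in a colour missing at both its ends---but your packaging via Hall's condition makes the obstruction explicit and the case analysis shorter. The paper's version, by contrast, avoids invoking Lemma~\ref{degre-sum} up front and never names SDR, trading conceptual clarity for a self-contained colour chase. Either is perfectly adequate here; your formulation would scale more gracefully if one wanted to handle higher-degree analogues.
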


\begin{proof}
Let $N_G(v)=\{v_1,v_2,v_3\}$.
Suppose, to the contrary, that $v$ is contained in a triangle $vv_2v_3$. By the choice of $G$, the graph $G'=G-vv_3$ has a total $(r+2)$-coloring $\varphi$ with $\varphi(vv_i)=i$ for $i=1,2$. Now erase the color of $v$ from $\varphi$. For any color $i\geq 3$, $i$ must appear on $v_3$ or on some edge incident with $v_3$, since otherwise, we can color $vv_3$ with $i$, a contradiction. Thus, the colors 1 and 2 cannot appear on $v_3$ or the edges incident with $v_3$.
Now uncolor $vv_2$ and color $vv_3$ with 2. By the same argument, any color $i\ge 3$ must appear on $v_2$ or the edges incident with $v_2$ and the colors 1 and 2 cannot appear on there.
Now recolor $v_2v_3$ with 1, color $vv_3$ with $\varphi(v_2v_3)$ and color $vv_2$ with 2. At last, the vertex $v$ can be easily colored since it is adjacent or incident with at most 6 colors.
\end{proof}

\begin{lem}\label{4-vertex}
Let $G$ be a $r$-minimal graph and let $v$ be a $4$-vertex of $G$ with $N_G(v)=\{v_1,v_2,v_3,v_4\}$. For any $1\le i\le 4$, the edge $vv_i$ cannot be contained in two triangles.
\end{lem}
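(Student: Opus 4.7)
I plan to argue by contradiction, analogously to the proof of Lemma \ref{3-vertex}. Suppose the edge $vv_1$ lies in two triangles $vv_1v_2$ and $vv_1v_3$, so that both $v_1v_2$ and $v_1v_3$ are edges of $G$. By the $r$-minimality of $G$, the graph $G' = G - vv_1$ admits a total $(r+2)$-coloring $\varphi$, and after permuting colors we may assume $\varphi(vv_i) = i$ for $i = 2, 3, 4$. I then erase $\varphi(v)$.

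The easy case is when some color $c \notin \{2, 3, 4\}$ is free at $v_1$ (that is, $c$ is neither $\varphi(v_1)$ nor the color of any edge incident with $v_1$ in $G'$). Setting $\varphi(vv_1) := c$ and noting that $v$ then sees at most four neighbor-colors and four incident-edge-colors, while $r + 2 \geq 15 > 8$, the vertex $v$ can be colored, yielding a total $(r+2)$-coloring of $G$ and contradicting the $r$-minimality of $G$.

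In the hard case, every free color at $v_1$ belongs to $\{2, 3, 4\}$. Since $\Delta(G) \leq r$, the colors used at $v_1$ number at most $1 + d_{G'}(v_1) \leq r$, so at least two colors of $\{2, 3, 4\}$ are free at $v_1$. Writing $\alpha := \varphi(v_1v_2)$ and $\beta := \varphi(v_1v_3)$, I observe that $\alpha \neq 2$ (because $v_1v_2$ is incident with $vv_2$), that $\alpha$ is not free at $v_1$ (being the color of an edge at $v_1$), and similarly $\beta \neq 3$ and $\beta$ is not free at $v_1$. The strategy is to recolor one of the edges $v_1v_2, v_1v_3$ with a free color $c$ at $v_1$: this liberates its old color at $v_1$, and in most situations the liberated color lies outside $\{2, 3, 4\}$, reducing us to the easy case. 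Such a recoloring is legal provided $c$ is also free at the other endpoint. I would split into subcases based on which two-subset of $\{2, 3, 4\}$ is free at $v_1$, and within each subcase on which of the relevant colors appear at $v_2$ and $v_3$. When every admissible single-edge swap is blocked, the two triangles jointly supply enough structure to perform a chained three-step recoloring in the spirit of Lemma \ref{3-vertex}, producing a valid total $(r+2)$-coloring of $G$ and contradicting $r$-minimality.

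The main obstacle will be the tight subcases in which the liberated color happens to equal the forbidden color $\varphi(vv_4) = 4$ (for example, when the free colors at $v_1$ are $\{2, 3\}$ and $\alpha = 4$, or when the free colors are $\{3, 4\}$ and both $3$ and $4$ already appear at $v_2$ while $4$ also appears at $v_3$). In such situations the obvious single-edge swaps either liberate the wrong color or are altogether blocked, and one must combine recolorings across both triangles, and if necessary perform a Kempe-chain swap on a carefully chosen pair of colors, to force the contradiction.
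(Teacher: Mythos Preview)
Your setup and easy case are correct and match the paper exactly (up to relabeling: the paper deletes $vv_4$ with triangles through $v_1,v_3$, whereas you delete $vv_1$ with triangles through $v_2,v_3$). The overall strategy---delete the edge, totally color the smaller graph, then recolor to extend---is identical.

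The gap is that you stop at an outline precisely where the content lies. What you call the ``tight subcases'' constitute essentially the whole proof, and the paper's argument shows they need more than a ``chained three-step recoloring''. Two concrete points. First, the paper opens the hard case by proving, as a separate Claim, that the free colors at your $v_1$ are \emph{exactly} the two triangle-edge colors $\{2,3\}$, i.e.\ the non-triangle color $4$ is never free there. Disposing of the cases where $4$ is free already requires recoloring the edges $vv_j$ at $v$ (not just $v_1v_2$ and $v_1v_3$), and in one branch five edges are recolored simultaneously; your plan to swap only on $v_1v_2$ or $v_1v_3$ will not close these cases by itself. Second, once that Claim is established, the paper pins down $S_\varphi(v_2)=S_\varphi(v_3)=\{4\}$ (in your labeling) and finishes via a further split on $S_\varphi(v_4)$, again moving colors on the $vv_j$'s. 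No Kempe-chain argument is used or needed anywhere---every step is a direct finite recoloring---so your hedge toward Kempe chains signals that you have not yet seen how the pieces fit. The strategy is right; the proof is in the case analysis you have deferred, and it is more delicate than your sketch suggests.
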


\begin{proof}
Suppose, to the contrary, that the edge $vv_4$ is contained in two triangles $vv_1v_4$ and $vv_3v_4$. By the choice of $G$, the graph $G'=G-vv_4$ has a total $(r+2)$-coloring $\varphi$ with $\varphi(vv_i)=i$ for $i=1,2,3$. Now erase the color of $v$ from $\varphi$. For any vertex $v$ in $G'$, let $S_{\varphi}(v)$ denote the set of colors not appearing on $v$ or the edges incident with $v$. First of all, we have $i\not\in S_{\varphi}(v_4)$ for any color $i\geq 4$, since otherwise, we can color $vv_4$ with $i$ and then the vertex $v$ can be easily colored (in the following we would not mention the coloring of $v$ for the last step). This implies that $S_{\varphi}(v_4)\subseteq \{1,2,3\}$. Note that $|S_{\varphi}(v_4)|\geq 2$.

\vspace{3mm}\noindent\textbf{Claim.} $S_{\varphi}(v_4)=\{1,3\}$

\begin{proof}
Otherwise, assume that $1\notin S_{\varphi}(v_4)$. This implies that $S_{\varphi}(v_4)=\{2,3\}$. Since $\varphi$ is a proper total coloring of $G'$, we may assume that $\varphi(v_1v_4)=4$.
If $i\in S_{\varphi}(v_1)$ for some $i\in \{2,3\}$, then recolor $v_1v_4$ with $i$ and color $vv_4$ with 4.
Otherwise, there is a color $i_0\ge 5$ such that $i_0\in S_{\varphi}(v_1)$. Note that 1 must appear on $v_2$ (resp.\,$v_3$) or edges incident with $v_2$ (resp.\,$v_3$), since otherwise, we can color recolor $vv_2$ (resp.\,$vv_3$) with 1, recolor $vv_1$ with $i_0$, and color $vv_4$ with 2 (resp.\,3). Moreover, for any $i\ge 4$, the color $i$ must appear on $v_2$ (resp.\,$v_3$), since otherwise, we can color $vv_2$ (resp.\,$vv_3$) with $i$ and color $vv_4$ with 2 (resp.\,3). This implies that $3\in S_{\varphi}(v_2)$ and $2\in S_{\varphi}(v_3)$.
Now we consider the color on $v_3v_4$. If $\varphi(v_3v_4)\neq 1$, then recolor $v_3v_4$ with 2 and color $vv_4$ with $\varphi(v_3v_4)$. Otherwise, $\varphi(v_3v_4)= 1$. In this case, recolor $vv_3$, $v_1v_4$ with 1, $v_3v_4$ with 3, $vv_1$ with $i_0$ and color $vv_4$ with 4.
\end{proof}

By the above claim, one can see that one of the edges $v_1v_4$ and $v_3v_4$ shall be colored with a color $i\geq 4$. Without loss of generality, assume that
$\varphi(v_1v_4)=4$. Note that $3\notin S_{\varphi}(v_1)$, since otherwise, we can recolor $v_1v_4$ with 3 and color $vv_4$ with 4. Moreover, $1\notin S_{\varphi}(v_3)$, since otherwise, we can exchange the colors on $v_1v_4$ and $v_1v$, then recolor $vv_3$ with 1 and color $vv_4$ with 3. For any $i\geq 4$, the color $i\notin S_{\varphi}(v_j)$ for any $j=1,3$, since otherwise, we can recolor $vv_j$ with $i$ and color $vv_4$ with $j$. Thus $S_{\varphi}(v_1)=S_{\varphi}(v_3)=\{2\}$. If there is a color $i\ge 4$ such that $i\in S_{\varphi}(v_2)$, then we can recolor $vv_2$ with $i$, $vv_1$ with 2, and color $vv_4$ with 1. Otherwise, we have $S_{\varphi}(v_2)\subseteq \{1,3\}$. Without loss of generality, let $1\in S_{\varphi}(v_2)$. Then we recolor $vv_2$  and $v_1v_4$ with 1, $vv_1$ with 2, and color $vv_4$ with 4.
\end{proof}

\begin{lem}\label{3-mas}
Let $G$ be a $r$-minimal graph and let $V_i$ be the set of $i$-vertices in $G$. We have $|V_{\Delta}|>2|V_3|$.
\end{lem}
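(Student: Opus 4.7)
The plan is to combine \autoref{degre-sum} with a structural reducibility claim about shared 3-neighbors. First, since $3\leq\lfloor r/2\rfloor$ when $r\geq 13$, \autoref{degre-sum} applied to any 3-vertex $v$ with neighbor $u$ gives $d_G(u)\geq r+3-3=r$; combined with $\Delta\leq r$ this forces $d_G(u)=\Delta=r$. Consequently every 3-vertex has exactly three neighbors in $V_\Delta$, and so the bipartite edge count is $|E(V_3,V_\Delta)|=3|V_3|$.

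The core claim I will establish is that no $\Delta$-vertex is adjacent to two distinct 3-vertices. Once this is in hand, each $\Delta$-vertex contributes at most one edge to $E(V_3,V_\Delta)$, whence $3|V_3|\leq|V_\Delta|$, and hence $|V_\Delta|\geq 3|V_3|>2|V_3|$ when $|V_3|\geq 1$. The remaining case $|V_3|=0$ is trivial, since the maximum degree is always attained so $|V_\Delta|\geq 1>0$.

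To prove the core claim I argue by contradiction. Suppose some $u\in V_\Delta$ has two 3-vertex neighbors $v_1,v_2$, with $N_G(v_i)=\{u,a_{i,1},a_{i,2}\}$; by \autoref{3-vertex} the pairs $u,a_{i,j}$ and $a_{i,1},a_{i,2}$ are pairwise non-adjacent. By the minimality of $G$, the graph $G'=G-\{uv_1,uv_2\}$ admits a total $(r+2)$-coloring $\varphi$. Uncolor the vertex colors at $v_1$ and $v_2$ and try to extend $\varphi$ by coloring $uv_1$ and $uv_2$; a count shows each of these edges faces at most $\Delta+1\leq r+1$ forbidden colors against $r+2$ total, leaving at least one available color. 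If the two lists of available colors are not identical singletons, we pick distinct colors for $uv_1,uv_2$ and then recolor $v_1,v_2$ (each subject to at most six constraints while $r+2\geq 15$ colors are on hand), extending $\varphi$ to $G$ and contradicting minimality.

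The main obstacle is the residual deadlock case in which both $uv_1$ and $uv_2$ are forced to a single common color $c$. This scenario imposes rigid equalities: in particular $\{\varphi(v_1a_{1,1}),\varphi(v_1a_{1,2})\}$ and $\{\varphi(v_2a_{2,1}),\varphi(v_2a_{2,2})\}$ must coincide with some common pair $\{\alpha,\beta\}$, while the remaining $\Delta-2$ edges at $u$ together with $\varphi(u)$ exhaust all colors outside $\{c,\alpha,\beta\}$. To break the deadlock I perform a Kempe-type swap on one of the four auxiliary edges $v_ia_{i,j}$, recoloring it to an alternative color supplied by the $\Delta$-vertex $a_{i,j}$ and thereby freeing $\alpha$ or $\beta$ for one of $uv_1,uv_2$; should every such swap fail (each $v_ia_{i,j}$ admitting only its current color), one passes to exchanges on edges at $u$ or vertex-color swaps at $a_{i,j}$. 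The simultaneous failure of all such exchanges is incompatible with the palette of size $r+2\geq 15$ available at vertices of degree $r$, yielding the contradiction that establishes the bound $|V_\Delta|>2|V_3|$.
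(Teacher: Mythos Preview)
Your argument has a genuine gap in the ``deadlock'' case. You correctly reduce to the situation where the only colour available for each of $uv_1,uv_2$ is a common colour $c$, and you correctly deduce that the two edges at each $v_i$ are coloured with the same pair $\{\alpha,\beta\}$, these being precisely the colours missing at $u$ besides $c$. But from there the proposal becomes a sketch: you propose recolouring some $v_ia_{i,j}$, observe this may fail when the unique colour missing at $a_{i,j}$ is the colour of the companion edge at $v_i$, and then assert that one ``passes to exchanges on edges at $u$ or vertex-color swaps at $a_{i,j}$'' and that simultaneous failure ``is incompatible with the palette of size $r+2\geq 15$''. No concrete exchange is exhibited, no Kempe chain is traced, and nothing in the argument actually uses $r\geq 13$; the numerology $15$ is invoked but plays no identifiable role. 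In total colouring, edge-only Kempe chains can conflict with vertex colours, so the usual $(\alpha,\beta)$-swap is not automatically available. As written, the deadlock is not broken, and the core claim that no $\Delta$-vertex has two $3$-neighbours is unproved. Note also that your claim would yield the stronger bound $|V_\Delta|\geq 3|V_3|$, which the paper does not assert.

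The paper takes a different and complete route. It considers the bipartite subgraph $H$ on $V_3\cup V_\Delta$ with edge set all edges of $G$ incident to $V_3$, and shows $H$ is a \emph{forest}. The reducible configuration is not a single $\Delta$-vertex with two $3$-neighbours but an entire cycle $C$ in $H$: delete $E(C)$, colour $G-E(C)$, erase the colours on the $3$-vertices of $C$, and observe each edge of $C$ then has a list of at least two available colours; since even cycles are $2$-edge-choosable, the edges of $C$ can be coloured, and the $3$-vertices (pairwise non-adjacent by \autoref{degre-sum}) are then coloured last. Acyclicity gives $|V_3|+|V_\Delta|>|E(H)|=3|V_3|$, hence $|V_\Delta|>2|V_3|$. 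This avoids the deadlock entirely: by removing a whole cycle rather than a two-edge path, every uncoloured edge has two free colours, and the $2$-choosability of even cycles handles the global consistency in one stroke.
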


\begin{proof}
If $|V_3|=0$, then it is trivial. If $|V_3|\not=0$, then by Lemma \ref{degre-sum}, $r=\Delta$.
Let $E$ be the set of edges in $G$ having one end-vertex in $V_3$ and let $H$ be the bipartite subgraph with vertex set $V_3\cup V_{\Delta}$ and edge set $E$. First of all, we prove that $H$ is a forest. Suppose, to the contrary, that $H$ contains a cycle $C$. Then this cycle is of even length in which alternate vertices have degree $3$ in $G$. Since $G$ is $\Delta$-minimal, the graph $G'=G-E(C)$ has a total $(\Delta+2)$-coloring $\varphi$. Now erase the colors of the 3-vertices on $C$ from $\varphi$. Let $e$ be an arbitrary edge of $C$. One can see that $e$ is now incident with at most $\Delta-1$ colored edges and one colored vertex, hence there are at least $(\Delta+2)-(\Delta-1+1)=2$ available colors for $e$. Therefore, the edges in $E(C)$ can be properly colored since every even cycle is 2-edge-choosable. At last, the 3-vertices on $C$ can be colored since each of them is now incident with at most six colored elements and no two of them are adjacent in $G$ by Lemma \ref{degre-sum}. This contradiction implies that $H$ is a forest and thus $|V(H)|=|V_3|+|V_{\Delta}|>|E(H)|$. Moreover, the neighbors of every vertex in $V_3$ belong to the vertex set $V_{\Delta}$ by Lemma \ref{degre-sum}. This implies that $|E(H)|=3|V_3|$. Hence we conclude that $|V_{\Delta}|>2|V_3|$.
\end{proof}

In the following, we restrict the minimal graph $G$ to be a 1-planar graph and assume that $G$ has already been embedded on a plane so that every edge is crossed by at most one other edge and the number of crossings is as small as possible.
The {\it associated plane graph} $\gx$ of $G$ is the plane graph that is obtained from $G$ by turning all crossings of $G$ into new $4$-vertices. A vertex in $\gx$ is {\it false} if it is not a vertex of $G$ and {\it true} otherwise. By a {\it false} face, we mean a face $f$ in $\gx$ that is incident with at least one false vertex; otherwise, we call $f$ {\it true}.

\begin{lem}{\rm\cite{Zhang.2011}}\label{3-incident-5+f}
Let $v$ be a $3$-vertex in $G$. If $v$ is incident with two false $3$-faces $vv_1v_2$ and $vv_1v_3$ in $\gx$, then $v_2$ and $v_3$ are both false and $v$ is incident with a $5^+$-face in $\gx$.
\end{lem}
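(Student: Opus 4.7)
The plan is to use Lemma~\ref{3-vertex} as the main tool, together with the transversality of crossings in a 1-planar drawing: at every false vertex of $\gx$ the two crossing $G$-edges pass through transversally, so opposite half-edges in the cyclic order at that vertex belong to the same $G$-edge. First I would argue that $v_1$ must be true. Suppose instead that $v_1$ is a crossing. Then the two 3-faces $vv_1v_2$ and $vv_1v_3$ sit on the two sides of the half-edge $v_1v$ at $v_1$, so $v_2$ and $v_3$ are the two $\gx$-neighbors of $v_1$ cyclically adjacent to $v$. By transversality they are the two true endpoints of the other $G$-edge through $v_1$, whence $v_2v_3\in E(G)$. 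But then $vv_2$ and $vv_3$ are edges of $\gx$ between true vertices, so they also lie in $E(G)$, producing a triangle $vv_2v_3$ through the 3-vertex $v$ and contradicting Lemma~\ref{3-vertex}. Once $v_1$ is known to be true, the hypothesis that $vv_1v_2$ and $vv_1v_3$ are faces immediately forces $v_2$ and $v_3$ to be false, since otherwise the corresponding 3-face would again be a triangle of $G$ containing $v$.

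For the $5^+$-face conclusion, I let $f$ be the third face of $\gx$ at $v$ and trace its boundary from $vv_2$. At the crossing $v_2$ the two $G$-edges are $va$ and $v_1a'$ for true vertices $a,a'$, and the face $f$ leaves $v_2$ along the half-edge $v_2a'$ opposite to $v_1$ in the cyclic order; symmetrically $f$ enters $v_3$ from a true vertex $b'$, where $vb$ and $v_1b'$ are the $G$-edges crossing at $v_3$. A 3-face would require $v_2v_3\in\gx$, which is impossible because the $\gx$-neighbors of $v_2$ are $v,v_1,a,a'$, all distinct from the false vertex $v_3$. A 4-face would require $a'v_3\in\gx$, forcing $a'\in\{b,b'\}$: the option $a'=b$ yields $v_1b\in E(G)$ and hence a triangle $vv_1b$ through the 3-vertex $v$, again contradicting Lemma~\ref{3-vertex}; the option $a'=b'$ would make the single $G$-edge $v_1a'$ crossed both at $v_2$ and at $v_3$, violating 1-planarity. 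Hence $|f|\geq 5$.

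The main obstacle I expect is the careful bookkeeping at each crossing when deciding which vertex the boundary walk of $f$ visits next; everything else is essentially forced. Once the transversality rule pins that step down, the two structural constraints ``no triangle through a 3-vertex'' (Lemma~\ref{3-vertex}) and ``at most one crossing per $G$-edge'' (1-planarity) combine to knock out every candidate of length $3$ or $4$ for $f$, as well as to rule out the case $v_1$ false, giving both parts of the lemma simultaneously.
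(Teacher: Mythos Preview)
The paper does not supply its own proof of this lemma; it is quoted directly from \cite{Zhang.2011} without argument. Your proof is correct and self-contained within the present paper, resting only on Lemma~\ref{3-vertex}, the transversality of the two $G$-edges at a crossing, and the single-crossing condition of $1$-planarity.

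Two minor remarks. First, once $v_1$ is shown to be true, the hypothesis that the two $3$-faces are \emph{false} already forces $v_2$ and $v_3$ to be false, so your second appeal to Lemma~\ref{3-vertex} at that point is not strictly necessary---though it does show, as a bonus, that the word ``false'' in the hypothesis is itself redundant. Second, in the $4$-face analysis you could shortcut the case $a'=b$: since the boundary walk of $f$ must enter $v_3$ along the half-edge $v_3b'$ (by the same transversality reasoning you used at $v_2$), a $4$-face would force $a'=b'$ outright. Your handling via Lemma~\ref{3-vertex} is nonetheless valid and has the virtue of not requiring that extra tracking step.
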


%

\begin{lem}\label{4-v-adj-4-face}
Every $4$-vertex in $G$ is incident with at most three $3$-faces in $\gx$.
\end{lem}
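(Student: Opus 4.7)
The plan is to argue by contradiction, assuming a $4$-vertex $v\in V(G)$ is incident with four $3$-faces in $\gx$. Since $d_{\gx}(v)=d_G(v)=4$, let $u_1,u_2,u_3,u_4$ be the neighbors of $v$ in $\gx$ listed in cyclic order around $v$; the hypothesis forces each $u_iu_{i+1}$ (indices mod $4$) to be an edge of $\gx$, so $v$ lies inside a $4$-cycle $u_1u_2u_3u_4$ of $\gx$. The first thing I would record is that no two consecutive vertices $u_i,u_{i+1}$ can both be false: such an edge of $\gx$ would be a segment of some edge of $G$ lying between two distinct crossings, contradicting $1$-planarity. Hence the set of false $u_i$'s is an independent set in the $4$-cycle, and up to relabeling only three configurations remain: zero, exactly one, or exactly two opposite false vertices among the $u_i$.

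The two easy configurations reduce directly to Lemma \ref{4-vertex}. If all $u_i$ are true, then every edge $vu_i$ and every edge $u_iu_{i+1}$ lies between two true vertices of $\gx$ and is therefore an uncrossed edge of $G$; so $vu_1u_2$, $vu_2u_3$, $vu_3u_4$, $vu_4u_1$ are four triangles in $G$, and the edge $vu_1$ sits in two of them, contradicting Lemma \ref{4-vertex}. If exactly one neighbor, say $u_1$, is false, then $u_2,u_3,u_4$ are true and the edges $vu_2,vu_3,vu_4,u_2u_3,u_3u_4$ all belong to $G$; thus $vu_2u_3$ and $vu_3u_4$ are two triangles of $G$ meeting at the edge $vu_3$, again contradicting Lemma \ref{4-vertex}.

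The main obstacle, and the step I expect to require the most care, is the case where two opposite neighbors, say $u_1$ and $u_3$, are false. Here I would exploit the local structure at the crossing $u_1$: the two edges of $G$ meeting at $u_1$ are $vu_1'$ for some true vertex $u_1'$ and some edge $ab$ joining two true vertices $a,b$, and the four half-edges at $u_1$ alternate cyclically in $\gx$ according to which original edge of $G$ they belong to. Since the $3$-faces $vu_1u_2$ and $vu_4u_1$ force $u_1u_2$ and $u_1u_4$ to be the two edges of $\gx$ adjacent to $u_1v$ at $u_1$, the alternation yields $\{u_1u_2,u_1u_4\}=\{u_1a,u_1b\}$ and hence $\{u_2,u_4\}=\{a,b\}$. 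In particular, $u_2u_4$ is an edge of $G$ that is crossed at $u_1$ by the edge $vu_1'$. Running the same analysis at the crossing $u_3$ shows that $u_2u_4$ must also be crossed at $u_3$, contradicting the assumption that each edge of $G$ is crossed by at most one other edge. This closes the final case and completes the proof.
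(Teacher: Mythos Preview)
Your argument is correct and follows the same route as the paper: assume all four faces at $v$ are triangles, use that false vertices are pairwise nonadjacent in $\gx$ to reduce to at most two (opposite) false neighbors, and invoke Lemma~\ref{4-vertex} when at least three neighbors are true. The only difference is cosmetic, in the two-opposite-false case: the paper observes that the half-edges at $u_1$ and at $u_3$ produce \emph{two distinct} curves from $u_2$ to $u_4$, hence a multi-edge, contradicting simplicity; you instead identify them (via simplicity) as the single edge $u_2u_4$ and read the contradiction as two crossings on one edge. These are two phrasings of the same local obstruction, so the proof stands.
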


\begin{proof}
Let $v$ be a 4-vertex in $G$ and let $v_1,v_2,v_3,v_4$ be the neighbors in $\gx$ of $v$ that occurs clockwise around $v$. Suppose that $v$ is incident with four 3-faces in $\gx$. Then $v_1v_2,v_2v_3,v_3v_4,v_4v_1\in E(\gx)$. Since no two false vertices are adjacent in $\gx$, there are at most two false vertices among $v_1,v_2,v_3$ and $v_4$. If two of them, say $v_1$ and $v_3$, are false, then we would find two edges in $G$ that connect $v_2$ to $v_4$: one goes through the point $v_1$ and the other goes through the point $v_3$, contradicting the fact that $G$ is simple. Thus we shall assume that there are at least three true vertices, say $v_1,v_2$ and $v_3$, among the four neighbors of $v$. However, this is impossible by Lemma \ref{4-vertex} since $vv_1v_2$ and $vv_2v_3$ are two adjacent triangles in $G$ with $d_G(v)=4$.
\end{proof}

\begin{lem}\label{5-vertex}
Every $5$-vertex in $G$ is either incident with at least two $4^+$-faces in $\gx$, or adjacent to at least three true vertices in $\gx$, or
incident with one $4^+$-face and adjacent to two true vertices in $\gx$.
\end{lem}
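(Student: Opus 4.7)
The natural approach is a case analysis on the number $k$ of $4^+$-faces of $\gx$ incident with the given $5$-vertex $v$, with the cases $k\ge 2$, $k=0$, $k=1$ matching the three disjuncts (i), (ii), (iii) of the conclusion. List the neighbors of $v$ in $\gx$ as $v_1,\dots,v_5$ clockwise around $v$. The structural fact I will lean on throughout is the well-known consequence of our minimum-crossing embedding of the $1$-planar graph $G$: no two false vertices of $\gx$ are adjacent. Consequently the false vertices appearing among $v_1,\dots,v_5$ form an independent set in whatever subgraph of $\gx$ they happen to span.

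If $k\ge 2$, conclusion (i) is immediate. If $k=0$, every face incident with $v$ is a $3$-face, which forces $v_iv_{i+1}\in E(\gx)$ for all $i$ (indices mod $5$), so the $v_i$ lie on a $5$-cycle of $\gx$. Since $\alpha(C_5)=2$, at most two of the $v_i$ are false and hence at least three are true, giving (ii). If $k=1$, I may assume after relabelling that the unique $4^+$-face occupies the corner between $vv_5$ and $vv_1$; the remaining four corners around $v$ are $3$-faces, so $v_iv_{i+1}\in E(\gx)$ for $i=1,2,3,4$ and the $v_i$ lie on a path $v_1v_2v_3v_4v_5$ in $\gx$. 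As $\alpha(P_5)=3$, at most three of the $v_i$ are false, so at least two are true, which together with $k=1$ yields (iii).

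The whole argument is essentially a two-line independence-number count once the case split is in place, so I do not expect any substantive obstacle. The only point that deserves care is the tacit use of the non-adjacency of false vertices in $\gx$; this is a standard property of $1$-planar drawings with the minimum number of crossings and is the same principle already invoked in the proof of Lemma~\ref{4-v-adj-4-face}, so it can be cited rather than re-proved.
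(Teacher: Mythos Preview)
Your proof is correct and follows essentially the same route as the paper: both arguments hinge on the fact that false vertices are pairwise non-adjacent in $\gx$, and then count independence numbers in the $5$-cycle or $5$-path formed by the neighbors of $v$ according to how many $3$-faces surround $v$. The only cosmetic difference is that the paper phrases it as ``assume at most one $4^+$-face and at most two true neighbors, then deduce (iii),'' whereas you split directly on $k\in\{\ge 2,\,1,\,0\}$; the underlying observation is identical.
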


\begin{proof}
Let $v$ be a 5-vertex in $G$ and let $v_1,v_2,v_3,v_4,v_5$ be the neighbors in $\gx$ of $v$ that occurs clockwise around $v$. Suppose that $v$ is incident with at most one $4^+$-face and adjacent to at most two true vertices in $\gx$. Without loss of generality, assume that $v_1v_2,v_2v_3,v_3v_4,v_4v_5\in E(\gx)$. Since no two false vertices are adjacent in $\gx$, there are at most three false vertices among $v_1,v_2,v_3,v_4$ and $v_5$. This implies that $v$ is adjacent to exactly
two true vertices in $\gx$. On the other hand, $v$ is incident with exactly one $4^+$-face because otherwise $v_1v_2v_3v_4v_5$ would be a 5-cycle in $\gx$, which implies that at least three of those five vertices are true, a contradiction to our assumption.
\end{proof}

\begin{lem}\label{5-face}
Every $5$-face in $\gx$ is incident with at most four $4^-$-vertices.
\end{lem}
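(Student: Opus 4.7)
The plan is to argue by contradiction: suppose some $5$-face $f$ in $\gx$ is incident with five $4^-$-vertices $v_1,v_2,v_3,v_4,v_5$ (in cyclic order). The key observation is that $\lfloor r/2\rfloor\geq 6$, so any true $4^-$-vertex satisfies the hypothesis of \autoref{degre-sum}, and hence all its neighbors in $G$ have degree at least $r+3-4\geq 12$.

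First I would pin down how many of the $v_i$ can be false. Because no two false vertices of $\gx$ are adjacent, and the $v_i$ form a closed walk of length $5$ in $\gx$, at most two of them are false; equivalently, at least three of the $v_i$ are true vertices of $G$. A quick case check on a $5$-cycle (the two false vertices, if present, must sit in non-adjacent positions) shows that among any three true vertices placed on the five positions, some two must be consecutive on the face. Call them $u$ and $u'$.

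Next I would translate the $\gx$-incidence back to $G$. Since true vertices of $\gx$ keep the same degree they had in $G$, both $u$ and $u'$ have $d_G\leq 4$. Moreover, an edge of $\gx$ between two true vertices is necessarily an uncrossed edge of $G$, so $uu'\in E(G)$. Now $d_G(u)\leq 4\leq \lfloor r/2\rfloor$, so \autoref{degre-sum} forces
\[
d_G(u)+d_G(u')\geq r+3\geq 16,
\]
while the bound $d_G(u),d_G(u')\leq 4$ gives $d_G(u)+d_G(u')\leq 8$, the desired contradiction.

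The only delicate point is the combinatorial step that guarantees two consecutive true vertices on the $5$-face; everything else is a direct appeal to \autoref{degre-sum} and the standing fact that false vertices are pairwise non-adjacent in $\gx$. I would not anticipate any technical obstacles beyond keeping the two senses of ``degree'' (in $G$ versus $\gx$) straight and ensuring that the $5$-face is treated as a closed walk, which is enough for the adjacency argument above.
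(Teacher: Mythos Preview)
Your argument is correct and is essentially the same as the paper's proof: both assume all five incident vertices are $4^-$-vertices, use the non-adjacency of false vertices on a $5$-face to force at least three true vertices, apply the pigeonhole principle to find two consecutive true $4^-$-vertices, and then invoke Lemma~\ref{degre-sum} for the contradiction. The paper's write-up is more compressed (it packages the pigeonhole step into a single sentence), but there is no substantive difference in strategy or in the ingredients used.
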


\begin{proof}
Suppose, to the contrary, that the 5-face $f$ is incident only with $4^-$-vertices in $\gx$. Then $f$ is incident with at least three false vertices, because otherwise we would find an edge $uv$ on $f$ such that $u$ and $v$ are both true $4^-$-vertices, which is impossible by Lemma \ref{degre-sum}. On the other hand, $f$ can be incident with at most two false vertices since no two false vertices are adjacent in $\gx$. This contradiction completes the proof.
\end{proof}

\section{The proof of Theorem \ref{mainthm}}

We call a vertex $v$ in $\gx$ \emph{small} if $d_{\gx}(v)\leq 5$. Note that the degree of a false vertex in $\gx$ is four, so every false vertex is small.
We call $u$ the \emph{tri-neighbor} of $v$ if $uv$ is an edge of $G$ with $d_G(v)=4$ and $uv$ is incident with a 3-face $uvw$ in $\gx$ so that $w$ is true. Note that in this situation $u$ cannot be a tri-neighbor of $w$ by Lemma \ref{degre-sum}. Now we start to prove Theorem \ref{mainthm}.

Suppose that $G$ is a minimum counterexample to it. We then have that $G$ is 2-connected and moreover, $\delta(G)\geq 3$ by Lemma \ref{degre-sum}. In the following, we apply the discharging method to the associated plane graph $\gx$ of $G$ and complete the proof by contradiction. Note that $\gx$ is also 2-connected.

We now assign an initial charge $c$ to each element $x\in V(\gx)\cup F(\gx)$ as follows. If $x\in V(\gx)$, then let $c(x)=d_{\gx}(x)-6$. If $x\in F(\gx)$, then let $c(x)=2d_{\gx}(x)-6$.
Since $\gx$ is a planar graph,
$\sum_{x\in V(\gx)\cup F(\gx)}c(x)=-12$ by the well-known Euler's formula. We redistribute the initial charges on $V(\gx)\cup F(\gx)$ by the discharging rules below. Let $c'(x)$ be the final charge of an element $x\in V(\gx)\cup F(\gx)$ after discharging.
We still have $\sum_{x\in V(\gx)\cup F(\gx)}c'(x)=-12<0$, since our rules only move charge around and do not affect the sum.\\[.5em]
\noindent R1. Every $4^+$-face redistributes its initial charge uniformly among the small vertices that are incident with it in $\gx$.\\
\noindent R2. Every $\Delta$-vertex gives $\frac{1}{2}$ to a common pot from which each 3-vertex receives 1, if $|V_3|>0$.\\
\noindent R3. Let $u,v$ be true vertices of $\gx$ and let $uv\in E(\gx)$. If $v$ is small, then $u$ sends $\frac{1}{3}$ to $v$; moreover, if $u$ is a tri-neighbor of $v$, then $u$ sends an addition of $\frac{1}{12}$ to $v$.\\[.5em]
Note that in R2, the common pot can also be seen as a pseudo-point that has initial charge zero.
In the next six rules, we assume that $uv$ crosses $xy$ at a false vertex $w$ in $\gx$ there.\\[.5em]
\noindent R4. If $d_{\gx}(u)\geq 9$, $ux, uy\not\in E(\gx)$ and $v$ is a small vertex, then $u$ sends $\frac{1}{3}$ to $v$ through $w$.\\
\noindent R5. If $d_{\gx}(u)\geq 9$, $ux\not\in E(\gx)$ and $uy\in E(\gx)$, then $u$ sends $\frac{1}{4}$ to $w$. Furthermore, if $d_{\gx}(v)\leq 4$, then $u$ sends $\frac{1}{3}$ to $v$ through $w$.\\
\noindent R6. If $d_{\gx}(u)\geq 9$, $ux, uy, vx\in E(\gx)$ and $y$ is a small vertex, then $u$ sends $\frac{3}{4}$ to $w$. Furthermore, if $d_{\gx}(v)\leq 4$, then $u$ sends $\frac{1}{24}$ to $v$ through $w$.\\
\noindent R7. If $d_{\gx}(u)\geq 9$, $ux, uy\in E(\gx)$ and either $vx\not\in E(\gx)$ or $y$ is not a small vertex, then $u$ sends $\frac{2}{3}$ to $w$. Furthermore, if $d_{\gx}(v)\leq 4$, then $u$ sends $\frac{1}{8}$ to $v$ through $w$.\\
\noindent R8. If $d_{\gx}(u)=8$ and $ux, uy\in E(\gx)$, then $u$ sends $\frac{1}{2}$ to $w$.\\
\noindent R9. If $d_{\gx}(u)=8$, $ux\in E(\gx)$ and $uy\not\in E(\gx)$, then $u$ sends $\frac{1}{12}$ to $w$.\\[.5em]
In the following, we check that the final charge $c'$ on each vertex and face is nonnegative. And we also show that the final charge of the common pot is nonnegative. This implies that $\sum_{x\in V(\gx)\cup F(\gx)}c'(x)\geq 0$, a contradiction.

First of all, since $|V_{\Delta}|>2|V_3|$ by Lemma \ref{3-mas}, the final charge of the common pot is at least $\frac{1}{2}|V_{\Delta}|-|V_3|>0$ by R2. One can also check that the final charge of every face in $F(\gx)$ is exactly 0 by R1. Thus in the following we consider the vertices in $\gx$.

Let $v$ be a $d$-vertex in $\gx$ and let $v_1,v_2,\cdots,v_d$ be its neighbors in $\gx$ that occur around $v$ in a clockwise order. By $f_i$ denote the face incident with $vv_i$ and $vv_{i+1}$ in $\gx$, where the addition on subscripts are taken modulo $d$.

\textbf{Case 1.} $d=3$.

\textbf{Case 1.1.} If $v$ is adjacent to at most one false vertex in $\gx$, then without loss of generality assume that $v_2$ and $v_3$ are true. By Lemmas \ref{degre-sum} and \ref{3-vertex}, neither $v_2$ nor $v_3$ is small and $f_2$ is a $4^+$-face. Thus by R1 and R3, $v$ receives at least $2\times\frac{1}{3}+\frac{2}{4-2}=\frac{5}{3}$ from $v_2,v_3$ and $f_2$. By Lemmas \ref{3-vertex} and \ref{3-incident-5+f}, at least one of $f_1$ and $f_3$, say $f_1$, shall be a $4^+$-face. Then by R1, $f_1$ sends at least $\frac{2}{4-1}=\frac{2}{3}$ to $v$. Furthermore, $v$ would receive 1 from the common pot by R2. Therefore, $c'(v)\geq -3+\frac{5}{3}+\frac{2}{3}+1>0$.

\textbf{Case 1.2.} If $v$ is adjacent to two false vertices in $\gx$, say $v_1$ and $v_2$, then $f_1$ is a $4^+$-face since $v_1v_2\not\in E(\gx)$. By R1 and R3, $v$ receives a total of $1+\frac{1}{3}=\frac{4}{3}$ from the common pot and $v_3$. Now we consider three subcases.

First, assume that $f_2$ and $f_3$ are both $4^+$-faces. Then by R1, $f_1$, $f_2$ and $f_3$ sends at least $\frac{2}{4}=\frac{1}{2}$, $\frac{2}{4-1}=\frac{2}{3}$ and $\frac{2}{4-1}=\frac{2}{3}$ to $v$, respectively. Therefore, $c'(v)\geq -3+\frac{4}{3}+\frac{1}{2}+\frac{2}{3}+\frac{2}{3}>0$.

Second, assume that $f_2$ is a $4^+$-face and $f_3$ is a 3-face. Let $v'_1$ be a vertex such that $vv'_1$ is an edge in $G$ that goes through the false vertex $v_1$ in $\gx$. Then by Lemmas \ref{degre-sum} and \ref{3-vertex}, $v'_1$ is a $\Delta$-vertex and $v'_1v_3\not\in E(\gx)$, because otherwise $vv'_1v_3$ would be a triangle in $G$. Thus by R4 and R5, $v$ receives $\frac{1}{3}$ from $v'_1$.
If $f_2$ is a $5^+$-face, then by R1, $f_2$ sends at least $\frac{4}{5-1}=1$ to $v$ (note that $v_3$ is not a small vertex). Since $f_1$ is a $4^+$-face, $f_1$ would send at least $\frac{2}{4}=\frac{1}{2}$ to $v$ by R1. Thus, $c'(v)\geq -3+\frac{4}{3}+\frac{1}{3}+1+\frac{1}{2}>0$. So we suppose that $f_2$ is a 4-face, from which $v$ receives at least $\frac{2}{4-1}=\frac{2}{3}$ by R1. If $f_1$ is a $5^+$-face, then by R1, $f_1$ sends at least $\frac{4}{5}$ to $v$. Thus $c'(v)\geq -3+\frac{4}{3}+\frac{1}{3}+\frac{2}{3}+\frac{4}{5}>0$. So suppose that $f_1$ is a $4$-face. Let $v'_2$ and $v'_3$ be the fourth (undefined) vertex on $f_2$ and $f_1$, respectively. Since $v_2$ is false and $v_2v'_2,v_2v'_3\in E(\gx)$, $v'_2v'_3$ is an edge in $G$. By Lemma \ref{degre-sum}, one of $v'_2$ and $v'_3$ is not small. If $v'_2$ is not small, then by R1, $f_1$ and $f_2$ sends at least $\frac{2}{4}=\frac{1}{2}$ and $\frac{2}{4-2}=1$ to $v$, respectively. It follows that $c'(v)\geq -3+\frac{4}{3}+\frac{1}{3}+\frac{1}{2}+1>0$. If $v'_3$ is not small, then by R1, $f_1$ and $f_2$ sends at least $\frac{2}{4-1}=\frac{2}{3}$ and $\frac{2}{4-1}=\frac{2}{3}$ to $v$, respectively. It follows that $c'(v)\geq -3+\frac{4}{3}+\frac{1}{3}+\frac{2}{3}+\frac{2}{3}=0$.

Third, assume that $f_2$ and $f_3$ are both $3$-faces. Then by Lemma \ref{3-incident-5+f}, $f_1$ is a $5^+$-face. Let $v'_i~(i=1,2)$ be a vertex such that $vv'_i$ is an edge in $G$ that goes through the false vertex $v_i$ in $\gx$. By a similar argument as the beginning of the second subcase above, one can prove that $v$ receives $\frac{1}{3}$ from each of $v'_1$ and $v'_2$. If $f_1$ is a $6^+$-face, then by R1, $f_1$ sends at least $\frac{6}{6}=1$ to $v$.
If $f_1$ is a 5-face, then assume that $v_3x_1$ crosses $vv'_1$ and $v_3x_2$ crosses $vv'_2$ in $G$. It follows that $x_1x_2\in E(G)$. By Lemma \ref{degre-sum}, at least one of $x_1$ and $x_2$ is not small. Thus by R1, $f_1$ sends at least $\frac{4}{5-1}=1$ to $v$. In each case we have $c'(v)\geq -3+\frac{4}{3}+2\times\frac{1}{3}+1=0$.

\textbf{Case 1.3.} If $v$ is adjacent to three false vertices in $\gx$, then $f_1,f_2$ and $f_3$ are $4^+$-faces. By R2, $v$ receives 1 from the common pot.
If two of $f_1,f_2$ and $f_3$ are of degree at least 5, then by R1 it is easy to calculate that $v$ receives at least $\frac{4}{5}+\frac{4}{5}+\frac{2}{4}>2$ from its incident faces and therefore $c'(v)\geq -3+1+2=0$. If exactly one of $f_1,f_2$ and $f_3$, say $f_3$, is a $5^+$-face, then let $x_1$ and $x_2$ be the fourth (undefined) vertices of the 4-faces $f_1$ and $f_2$, respectively. One can easily see that $x_1x_2\in E(G)$ and thus by Lemma \ref{degre-sum}, at least one of $x_1$ and $x_2$ is not small. Therefore, $v$ receives at least $\frac{4}{5}+\frac{2}{4}+\frac{2}{4-1}=\frac{59}{30}$ from its incident faces by R1. Assume that $vv'_2$ crosses $x_1x_2$ in $G$, then by Lemma \ref{degre-sum}, $v'_2$ is a $\Delta$-vertex. Thus, $v'_2$ sends at least $\frac{1}{8}$ to $v$ by R4--R7. This implies that $c'(v)\geq -3+1+\frac{59}{30}+\frac{1}{8}>0$. If $f_1,f_2$ and $f_3$ are all $4$-faces, then let $x_i~(i=1,2,3)$ be the fourth (undefined) vertices of the 4-faces $f_i$. It is easy to check that $x_1x_2,x_2x_3,x_3x_1\in E(G)$ by the drawing of $G$. Thus, at most one of $x_1,x_2$ and $x_3$ is small by Lemma \ref{degre-sum}. This implies that $v$ receives at least $\frac{2}{4}+\frac{2}{4-1}+\frac{2}{4-1}=\frac{11}{6}$ form its incident faces by R1. Assume that $vv'_i~(i=1,2,3)$ crosses $v_{i-1}v_i$ in $G$, where the subscripts are taken modulo 3, then by Lemma \ref{degre-sum}, $v'_i$ is a $\Delta$-vertex, from which $v$ receives at least $\frac{1}{8}$ by R4--R7. Therefore, $c'(v)\geq -3+1+\frac{11}{6}+3\times\frac{1}{8}>0$.

\textbf{Case 2.} $d=4$ and $v$ is a true vertex.

By Lemma \ref{4-v-adj-4-face}, $v$ is incident with at least one $4^+$-face in $\gx$. Thus we consider four subcases.

\textbf{Case 2.1.} If $v$ is incident with four $4^+$-faces in $\gx$, then $v$ receives at least $\frac{2}{4}=\frac{1}{2}$ from each of its incident faces by R1. This implies that $c'(v)\geq -2+4\times\frac{1}{2}=0$.

\textbf{Case 2.2.} If $v$ is incident with exactly three $4^+$-faces in $\gx$, say $f_2,f_3$ and $f_4$, then $v_1v_2\in E(\gx)$. Since no two false vertices are adjacent in $\gx$, at least one of $v_1$ and $v_2$, say $v_1$, is true, and moreover, is a $12^+$-vertex by Lemma \ref{degre-sum}. So by R3 and R1, $v$ receives $\frac{1}{3}$ from $v_1$, at least $\frac{2}{4-1}=\frac{2}{3}$ from $f_4$ and at least $\frac{2}{4}=\frac{1}{2}$ from each of $f_2$ and $f_3$. Therefore, $c'(v)\geq -2+\frac{1}{3}+\frac{2}{3}+2\times\frac{1}{2}=0$.

\textbf{Case 2.3.} If $v$ is incident with exactly two $4^+$-faces in $\gx$, then we consider two subcases.

Assume first that $f_1$ and $f_3$ are both $4^+$-faces. Then by a same argument as in Case 2.2, at least one of $v_2$ and $v_3$ and at least one of $v_1$ and $v_4$ are $12^+$-vertices. If $v_1$ and $v_2$ are both $12^+$-vertices, then by R3 and R1, $v$ receives $\frac{1}{3}$ from each of $v_1$ and $v_2$, at least $\frac{2}{4-2}=1$ from $f_1$ and at least $\frac{2}{4}=\frac{1}{2}$ from $f_3$. Thus, $c'(v)\geq -2+2\times\frac{1}{3}+1+\frac{1}{2}>0$. If $v_1$ and $v_3$ are both $12^+$-vertices, then by R3 and R1, $v$ receives $\frac{1}{3}$ from each of $v_1$ and $v_3$ and at least $\frac{2}{4-1}=\frac{2}{3}$ from each of $f_1$ and $f_3$. This implies that $c'(v)\geq -2+2\times\frac{1}{3}+2\times\frac{2}{3}=0$.

Second, assume that $f_1$ and $f_2$ are $4^+$-faces. If $v_1$ and $v_3$ are both true, then by Lemma \ref{degre-sum} they are $12^+$-vertices. So by R3 and R1, $v$ receives $\frac{1}{3}$ from each of $v_1$ and $v_3$ and at least $\frac{2}{4-1}=\frac{2}{3}$ from each of $f_1$ and $f_2$. This implies that $c'(v)\geq -2+2\times\frac{1}{3}+2\times\frac{2}{3}=0$. So we assume that at least one of $v_1$ and $v_3$ is false, which implies that $v_4$ is true since no two false vertices are adjacent in $\gx$.

If $v_1$ is false and $v_3$ is true, then let $v'_1$ be the vertex of $G$ so that $vv'_1$ is a crossed edge in $G$ with a crossing $v_1$. By Lemma \ref{4-vertex}, $v'_1v_4\not\in E(G)$, because otherwise $vv_4v'_1$ and $vv_3v_4$ would be two adjacent triangles in $G$ with a common 4-vertex. Note that $v'_1$ and $v_3$ are $12^+$-vertices by Lemma \ref{degre-sum}. So $v$ receives $\frac{1}{3}$ from $v'_1$ by R4 and R5, $\frac{1}{3}$ from each of $v_3$ and $v_4$ by R3 and at least $\frac{2}{4}=\frac{1}{2}$ from each of $f_1$ and $f_2$ by R1. This implies that $c'(v)\geq -2+\frac{1}{3}+2\times\frac{1}{3}+2\times\frac{1}{2}=0$.

If $v_1$ and $v_3$ are both false, then let $v'_i$ and $x_i~(i=1,3)$ be the vertices of $G$ so that $vv'_i$ crosses $v_4x_i$ in $G$ at the crossing $v_i$.
Note that $v'_1$ and $v'_3$ are both $12^+$-vertices by Lemma \ref{degre-sum}. By Lemma \ref{4-vertex}, $v'_1v_4$ and $v'_3v_4$ cannot simultaneously be the edges of $G$, because otherwise $vv_4v'_1$ and $vv_4v'_3$ would be two adjacent triangles in $G$ with a common 4-vertex. Without loss of generality, assume that $v'_1v_4\not\in E(G)$. By R3, R4 and R5, each of $v'_1$ and $v_4$ sends $\frac{1}{3}$ to $v$ (recall that $v_4$ is true).
If $v_2$ is true, then $v$ receives $\frac{1}{3}$ from $v_2$ by R3. Moreover, each of $f_1$ and $f_2$ sends at least $\frac{2}{4}=\frac{1}{2}$ to $v$ by R1. Thus, $c'(v)\geq -2+\frac{1}{3}+2\times\frac{1}{3}+2\times\frac{1}{2}=0$.
If $v_2$ is false, then let $v'_2$ be the vertex of $G$ so that $vv'_2$ is a crossed edge in $G$ with a crossing $v_2$. By Lemma \ref{degre-sum}, $v'_2$ is a $12^+$-vertex. If at least one of $f_1$ and $f_2$, say $f_1$, is a $5^+$-face, then $f_1$ sends at least $\min\{\frac{6}{6}, \frac{4}{4}\}=1$ to $v$ by R1 and Lemma \ref{5-face} and $f_2$ sends at least $\frac{2}{4}=\frac{1}{2}$ to $v$ by R1. Thus, $c'(v)\geq -2+2\times\frac{1}{3}+1+\frac{1}{2}>0$. So we assume that $f_1$ and $f_2$ are both 4-faces. This implies that $x_1x_3$ is a crossed edge in $G$ with the crossing $v_2$. By Lemma \ref{degre-sum}, at most one of $x_1$ and $x_3$ is small. So $f_1$ and $f_2$ totally sends at least $\frac{2}{4-1}+\frac{2}{4}=\frac{7}{6}$ to $v$ by R1. Recall that $v'_2$ and $v'_3$ are $12^+$-vertices. By R4--R7, $v'_2$ sends at least $\frac{1}{8}$ and $v'_3$ sends at least $\frac{1}{24}$ to $v$. Therefore, $c'(v)\geq -2+2\times\frac{1}{3}+\frac{7}{6}+\frac{1}{8}+\frac{1}{24}=0$.

\textbf{Case 2.4.} If $v$ is incident with exactly one $4^+$-faces in $\gx$, say $f_1$, then $v_2v_3,v_3v_4,v_4v_1\in E(\gx)$. Now we claim that at least one of $v_1$ and $v_2$ is false. Suppose, to the contrary, that $v_1$ and $v_2$ are true vertices. If $v_3$ is true, then either $vv_3v_4$ (when $v_4$ is true) or $vv_1v_3$ (when $v_4$ is false) is a triangle in $G$ that is adjacent to another triangle $vv_2v_3$, which is impossible by Lemma \ref{4-vertex}. Thus we shall assume that $v_3$ is false. By symmetry, $v_4$ is also false, but it contradicts the fact that $v_3v_4\in E(\gx)$. Without loss of generality, assume that $v_1$ is false. It follows that $v_4$ is a true vertex. By Lemma \ref{4-vertex}, exactly one of $v_2$ and $v_3$ shall be false, because otherwise $vv_2v_3$ and $vv_3v_4$ would be two adjacent triangles in $G$ with a common 4-vertex. Thus we consider two subcases.

Assume first that $v_2$ is false and $v_3$ is true. One can check that $v_3$ and $v_4$ are both tri-neighbors of $v$, which follows that each of $v_3$ and $v_4$ sends $\frac{1}{3}+\frac{1}{12}=\frac{5}{12}$ to $v$ by R3.
Let $v'_i~(i=1,2)$ be the vertex of $G$ so that $vv'_i$ is a crossed edge in $G$ with the crossing $v_i$. It is easy to see that $v'_1$ and $v'_2$ are $12^+$-vertices by Lemma \ref{degre-sum}. One can also prove that $v'_1v_4,v'_2v_3\not\in E(G)$ by a similar argument as in Case 2.3. Thus by R4 and R5, each of $v'_1$ and $v'_2$ sends $\frac{1}{3}$ to $v$. Since $f_1$ is a $4^+$-face, $f_1$ sends at least $\frac{2}{4}=\frac{1}{2}$ to $v$ by R1. Therefore, $c'(v)\geq -2+2\times\frac{5}{12}+2\times\frac{1}{3}+\frac{1}{2}=0$.

Now assume that $v_2$ is true and $v_3$ is false. It is easy to see that $vv_2v_4$ is a triangle in $G$ by the drawing of $G$. Let $v'_i~(i=1,3)$ be the vertex of $G$ so that $vv'_i$ is a crossed edge in $G$ with the crossing $v_i$. One can see that $v'_1$ and $v'_3$ are $12^+$-vertices by Lemma \ref{degre-sum} and can prove that $v'_1v_4,v'_3v_4\not\in E(G)$ by a similar argument as in Case 2.3. So each of $v'_1$ and $v'_3$ sends $\frac{1}{3}$ to $v$ by R3 and R4. Meanwhile, each of $v_2$ and $v_4$ sends $\frac{1}{3}$ to $v$ by R3 and $f_1$ sends at least $\frac{2}{4-1}=\frac{2}{3}$ to $v$ by R1 (note that $v_2$ is not small). Therefore, $c'(v)\geq -2+2\times\frac{1}{3}+2\times\frac{1}{3}+\frac{2}{3}=0$.

\textbf{Case 3.} $d=4$ and $v$ is a false vertex.

\textbf{Case 3.1.} If $v$ is incident with no 3-faces in $\gx$, then by R1, each of $f_1,f_2,f_3$ and $f_4$ sends at least $\frac{2}{4}=\frac{1}{2}$ to $v$. So $c'(v)\geq -2+4\times\frac{1}{2}=0$.

\textbf{Case 3.2.} If $v$ is incident with exactly one 3-face, say $f_1$, then $v_1v_2\in E(G)$. This implies that at most one of $v_1$ and $v_2$ can be a $7^-$-vertex by Lemma \ref{degre-sum}. Assume first that $\min\{d_{\gx}(v_1),d_{\gx}(v_2)\}\geq 8$. Then by R1,each of $f_2$ and $f_4$ sends at least $\frac{2}{4-1}=\frac{2}{3}$ to $v$ and $f_3$ sends at least $\frac{2}{4}=\frac{1}{2}$ to $v$. Moreover, each of $v_1$ and $v_2$ sends at least $\frac{1}{12}$ to $v$ by R5 and R9. Thus $c'(v)\geq -2+2\times\frac{2}{3}+\frac{1}{2}+2\times\frac{1}{12}=0$. Now assume that $d_{\gx}(v_1)\leq 7$. It follows that $\min\{d_{\gx}(v_2),d_{\gx}(v_3)\}\geq 9$ by Lemma \ref{degre-sum}. Thus $f_2,f_3$ and $f_4$ sends at least $\frac{2}{4-2}=1$, $\frac{2}{4-1}=\frac{2}{3}$ and $\frac{2}{4}=\frac{1}{2}$ to $v$ by R1, respectively. Therefore, $c'(v)\geq -2+1+\frac{2}{3}+\frac{1}{2}>0$.

\textbf{Case 3.3.} If $v$ is incident with exactly two 3-faces, then we consider two subcases.

Assume first that $f_1$ and $f_2$ are both 3-faces. Then $v_1v_2,v_2v_3\in E(G)$. If $d_{\gx}(v_2)\leq 8$, then by Lemma \ref{degre-sum}, $\min\{d_{\gx}(v_1),d_{\gx}(v_3),d_{\gx}(v_4)\}\geq 7$. This implies that each of $f_3$ and $f_4$ sends at least $\frac{2}{4-2}=1$ to $v$ and thus $c'(v)\geq -2+2\times 1=0$. So we assume that $d_{\gx}(v_2)\geq 9$. It follows that $v_2$ sends $\frac{2}{3}$ to $v$ by R7. If one of $v_1$ and $v_3$, say $v_1$, is small, then by R1, $f_3$ and $f_4$ sends at least $\frac{2}{4-1}=\frac{2}{3}$ and $\frac{2}{4}=\frac{1}{2}$ to $v$, respectively, since in this case we also have $d_{\gx}(v_3)\geq 11$ by Lemma \ref{degre-sum}. Moreover, $v_3$ sends $\frac{1}{4}$ to $v$ by R5. Therefore, $c'(v)\geq -2+\frac{2}{3}+\frac{2}{3}+\frac{1}{2}+\frac{1}{4}>0$. On the other hand, if neither $v_1$ nor $v_3$ is small, then by R1, each of $f_3$ and $f_4$ sends at least $\frac{2}{4-1}=\frac{2}{3}$ to $v$. Thus $c'(v)\geq -2+\frac{2}{3}+2\times\frac{2}{3}=0$.

Now assume that $f_1$ and $f_3$ are both 3-faces. If none of $v_1$, $v_2$, $v_3$ and $v_4$ is small, then by R1, each of $f_2$ and $f_4$ sends at least $\frac{2}{4-2}=1$ to $v$, which implies that $c'(v)\geq -2+2\times 1=0$. If at least one of $v_1$, $v_2$, $v_3$ and $v_4$, say $v_1$, is small, then by Lemma \ref{degre-sum}, $\min\{d_{\gx}(v_2),d_{\gx}(v_3)\}\geq 11$. So $f_2$ and $f_4$ sends at least $\frac{2}{4-2}=1$ and $\frac{2}{4}=\frac{1}{2}$ to $v$ by R1, respectively. Moreover, each of $v_2$ and $v_3$ sends $\frac{1}{4}$ to $v$ by R5. Therefore, $c'(v)\geq -2+1+\frac{1}{2}+2\times\frac{1}{4}=0$.

\textbf{Case 3.4.} If $v$ is incident with exactly three 3-faces, say $f_1,f_2$ and $f_3$, then $v_1v_2,v_2v_3,v_3v_4\in E(G)$. If $d_{\gx}(v_2)\leq 7$, then by Lemma \ref{degre-sum}, $\min\{d_{\gx}(v_1),d_{\gx}(v_3),d_{\gx}(v_4)\}\geq 9$. So $f_4$ sends at least $\frac{2}{4-2}=1$ to $v$ by R1, each of $v_1$ and $v_4$ sends $\frac{1}{4}$ to $v$ by R5 and $v_3$ sends at least $\frac{2}{3}$ to $v$ by R6 and R7. Thus $c'(v)\geq -2+1+2\times\frac{1}{4}+\frac{2}{3}>0$. So we shall assume that $d_{\gx}(v_2)\geq 8$. Similarly, we shall assume that $d_{\gx}(v_3)\geq 8$. If both $v_1$ and $v_4$ are small, then by Lemma \ref{degre-sum}, $\min\{d_{\gx}(v_2),v_3\}\geq 11$. It follows that each of $v_2$ and $v_3$ sends $\frac{3}{4}$ to $v$ by R6. Moveover, $f_4$ sends at least $\frac{2}{4}=\frac{1}{2}$ to $v$. Thus $c'(v)\geq -2+2\times\frac{3}{4}+\frac{1}{2}=0$. So we assume that at least one of $v_1$ and $v_4$ is not small. It follows that $f_4$ sends at least $\frac{2}{4-1}=\frac{2}{3}$ to $v$ by R1. If $d_{\gx}(v_1)\leq 7$ or $d_{\gx}(v_4)\leq 7$, then by Lemma \ref{degre-sum}, $\min\{d_{\gx}(v_2),d_{\gx}(v_3)\}\geq 9$. So by R6 and R7, each of $v_2$ and $v_3$ sends at least $\frac{2}{3}$ to $v$. Thus $c'(v)\geq -2+\frac{2}{3}+2\times\frac{2}{3}=0$. So we shall assume that $\min\{d_{\gx}(v_1),d_{\gx}(v_4)\}\geq 8$. It follows that $f_4$ sends at least $\frac{2}{4-2}=1$ to $v$ by R1. Moreover, each of $v_2$ and $v_3$ sends at least $\frac{1}{2}$ to $v$ by R6, R7 and R8. Therefore, $c'(v)\geq -2+1+2\times\frac{1}{2}=0$.

\textbf{Case 3.5.} If $v$ is incident with four 3-faces, then $v_1v_2,v_2v_3,v_3v_4,v_4v_1\in E(G)$ and thus at most one of $v_1,v_2,v_3$ and $v_4$ is a $7^-$-vertex by Lemma \ref{degre-sum}. Assume first that $d_{\gx}(v_1)\leq 7$. Then all of $v_2,v_3$ and $v_4$ are $9^+$-vertices by Lemma \ref{degre-sum}.
So by R6 and R7, each of $v_2,v_3$ and $v_4$ sends at least $\frac{2}{3}$ to $v$, which implies that $c'(v)\geq -2+3\times\frac{2}{3}=0$. Now assume that all of $v_1,v_2,v_3$ and $v_4$ are $8^+$-vertices. Then by R6, R7 and R8, each of those four vertices sends at least $\frac{1}{2}$ to $v$. This implies that $c'(v)\geq -2+4\times\frac{1}{2}=0$.

\textbf{Case 4.} $d=5$.

By R1 and R3, $v$ receives at least $\frac{2}{4}=\frac{1}{2}$ from each of its incident $4^+$-faces and $\frac{1}{3}$ from each of its adjacent true vertices in $\gx$. We consider three subcases according to Lemma \ref{5-vertex}.
If $v$ is incident with at least two $4^+$-faces, then $c'(v)\geq -1+2\times\frac{1}{2}=0$.
If $v$ is adjacent to at least three trues vertices in $\gx$, then $c'(v)\geq -1+3\times\frac{1}{3}=0$.
If $v$ is incident with one $4^+$-face and adjacent to two true vertices in $\gx$, then $c'(v)\geq -1+\frac{1}{2}+2\times\frac{1}{3}>0$.

\textbf{Case 5.} $d\geq 6$.

If $d\leq 7$, then it is trivial that $c'(v)=c(v)\geq 0$, so we assume that $d\geq 8$.

Let $S_f(v)$ denote the subgraph induced by the faces that are incident with $v$ in $\gx$. Then $S_f(v)$ can be decomposed into many parts, each of which is one of the five clusters in Figure \ref{fig}, and any two parts of which are adjacent only if they have a coJPGmmon edge $vw$ such that $w$ is a true vertex. The hollow vertices in Figure \ref{fig} are false vertices and the solid ones are true vertices; all the marked faces are $4^+$-faces and there is at least one $4^+$-face contained in the clusters of type 2, 4 and 5.

\begin{figure}
\begin{center}
    \includegraphics[height=6.5cm]{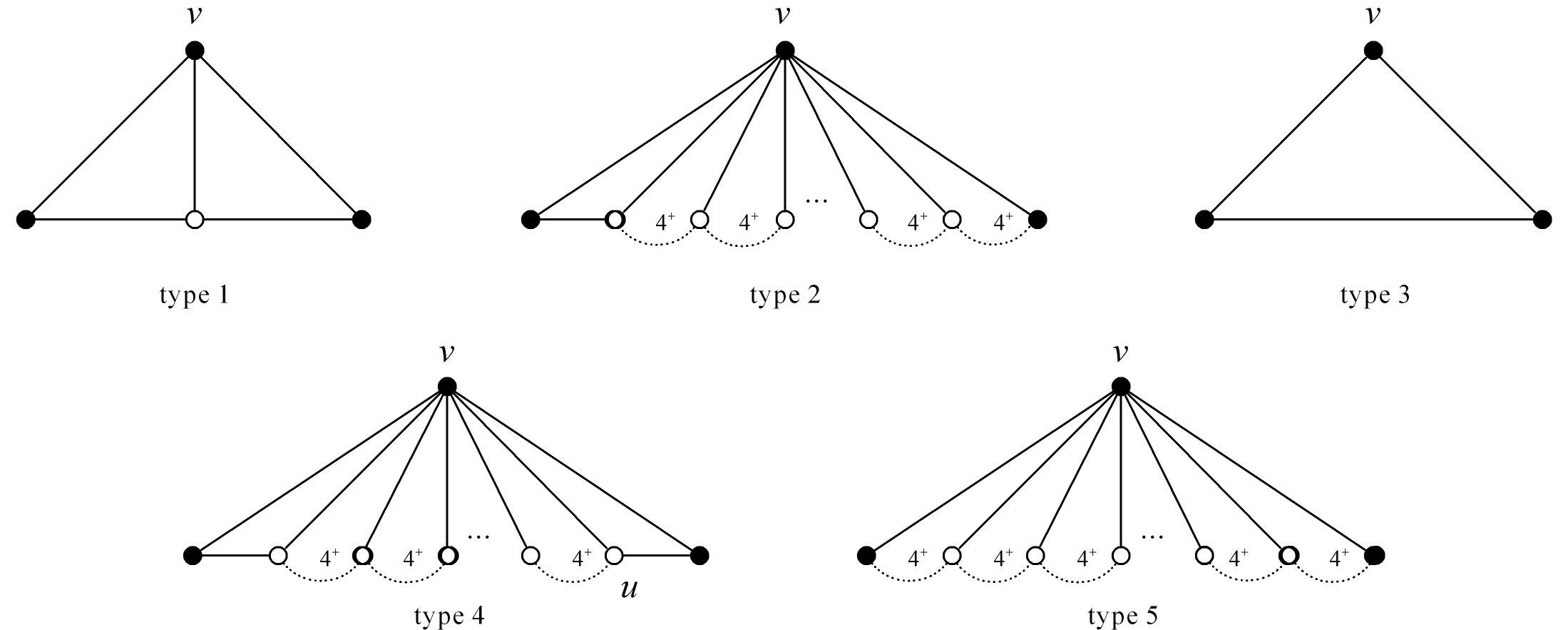}
\end{center}
    \caption{Five types of cluster}
    \label{fig}
\end{figure}

Let $a_i$ denote the largest possible value of the charges sent by $v$ to or through its adjacent false vertices in a cluster of type $i$.

If $d=8$, then by R8 and R9 we have $a_1=\frac{1}{2}$, $a_2=\frac{1}{12}$, $a_3=0$, $a_4=2\times\frac{1}{12}=\frac{1}{6}$ and $a_5=0$.

If $9\leq d\leq 11$, then by Lemma \ref{degre-sum}, $v$ is adjacent to no $4^-$-vertices in $G$. Thus by R4, R5, R6 and R7 we have $a_1=\frac{3}{4}$, $a_2=\frac{1}{4}$, $a_3=0$, $a_4=2\times\frac{1}{4}=\frac{1}{2}$ and $a_5=0$.

If $d\geq 12$, then $v$ may be adjacent to $4^-$-vertices in $G$, to which $v$ can send charges through the false vertices that are adjacent to $v$ in $\gx$. First of all,
$a_1=\max\{\frac{3}{4}+\frac{1}{24},\frac{2}{3}+\frac{1}{8}\}=\frac{19}{24}$ by R6 and R7 and $a_3=0$.
Let $H_i~(i=2,4,5)$ be a cluster of type $i$. Suppose that there are $s_i$ false vertices that are adjacent to $v$ in $H_i$. By R4 and R5, we have $a_2=\frac{1}{4}+\frac{1}{3}s_2$, $a_4=2\times\frac{1}{4}+\frac{1}{3}s_4=\frac{1}{2}+\frac{1}{3}s_4$ and $a_5=\frac{1}{3}s_5$.

Denote by $n_i$ the number of clusters of type $i$ contained in $S_f(v)$. Let $m$ be the total number of false vertices that are adjacent to $v$ in the clusters of type 2, 4 and 5.
One can easy to see that the following facts hold.

(1) $v$ is adjacent to $n_1+n_2+n_3+n_4+n_5$ true vertices in $\gx$.

(2) $v$ is adjacent to $n_1+m$ false vertices in $\gx$.

(3) $2n_1+2n_2+n_3+3n_4+n_5\leq d$.

By (1) and (2), it is easy to see that $m=d-2n_1-n_2-n_3-n_4-n_5$.

First of all, we calculate the largest possible value of the charges sent by $v$ to or through its adjacent false vertices in $\gx$, that is, the value of $n_1a_1+n_2a_2+n_3a_3+n_4a_4+n_5a_5$. Recall the values of $a_i$ we have obtained in each of the above cases. One can deduce that
\begin{align*}
n_1a_1+n_2a_2+n_3a_3+n_4a_4+n_5a_5&=\frac{1}{2}n_1+\frac{1}{12}n_2+\frac{1}{6}n_4
\end{align*}
if $d=8$,
\begin{align*}
n_1a_1+n_2a_2+n_3a_3+n_4a_4+n_5a_5&=\frac{3}{4}n_1+\frac{1}{4}n_2+\frac{1}{2}n_4
\end{align*}
if $9\leq d\leq 11$, and
\begin{align*}
n_1a_1+n_2a_2+n_3a_3+n_4a_4+n_5a_5&=\frac{19}{24}n_1+\frac{1}{4}n_2+\frac{1}{2}n_4+\frac{1}{3}m\\
&=\frac{19}{24}n_1+\frac{1}{4}n_2+\frac{1}{2}n_4+\frac{1}{3}(d-2n_1-n_2-n_3-n_4-n_5)\\
&=\frac{1}{3}d+\frac{1}{8}n_1-\frac{1}{12}n_2-\frac{1}{3}n_3+\frac{1}{6}n_4-\frac{1}{3}n_5.
\end{align*}
if $d\geq 12$.

Now, we calculate the largest possible value of the charges sent by $v$ to its adjacent true small vertices in $\gx$. Note that we should only consider the case $d\geq 11$ by Lemma \ref{degre-sum}. Since no two true small vertices are adjacent in $G$, in each cluster of type 1 or 3 $v$ is adjacent to at most one true small vertex in $\gx$. This implies that $v$ is adjacent to at most $n_1+n_2+n_3+n_4+n_5-\frac{1}{2}(n_1+n_3)=\frac{1}{2}(n_1+n_3)+n_2+n_4+n_5$ true small vertices in $\gx$. Recall the definition of tri-neighbors at the beginning of this section. One can see that $v$ can be tri-neighbors of at most $n_3$ vertices. Therefore, $v$ sends at most $$\frac{1}{6}(n_1+n_3)+\frac{1}{3}(n_2+n_4+n_5)+\frac{1}{12}n_3$$ to its adjacent true small vertices in $\gx$ by R3. Note that
R2 cannot be applied to $v$ if $6\leq d\leq 12$, since the application of R2 implies $\Delta=r\geq 13$ by Lemma \ref{degre-sum},
and that $v$ may send $\frac{1}{2}$ to a common pot by R2 if $d\geq 13$.

We combine those lines of calculation. Let $\gamma_d$ be the largest possible value of the charges sent by $v$ if $d_G(v)=d$. We have
\begin{align*}
\gamma_8&=\frac{1}{2}n_1+\frac{1}{12}n_2+\frac{1}{6}n_4\\
\gamma_9=\gamma_{10}&=\frac{3}{4}n_1+\frac{1}{4}n_2+\frac{1}{2}n_4,\\
\gamma_{11}&=\frac{3}{4}n_1+\frac{1}{4}n_2+\frac{1}{2}n_4+\frac{1}{6}(n_1+n_3)+\frac{1}{3}(n_2+n_4+n_5)+\frac{1}{12}n_3\\
&=\frac{11}{12}n_1+\frac{7}{12}n_2+\frac{1}{4}n_3+\frac{5}{6}n_4+\frac{1}{3}n_5,\\
\gamma_{12}&=\frac{1}{3}d+\frac{1}{8}n_1-\frac{1}{12}n_2-\frac{1}{3}n_3+\frac{1}{6}n_4-\frac{1}{3}n_5+\frac{1}{6}(n_1+n_3)+\frac{1}{3}(n_2+n_4+n_5)+\frac{1}{12}n_3\\
&=4+\frac{7}{24}n_1+\frac{1}{4}n_2-\frac{1}{12}n_3+\frac{1}{2}n_4, \\and\\
\gamma_d&=\frac{1}{3}d+\frac{1}{8}n_1-\frac{1}{12}n_2-\frac{1}{3}n_3+\frac{1}{6}n_4-\frac{1}{3}n_5+\frac{1}{6}(n_1+n_3)+\frac{1}{3}(n_2+n_4+n_5)+\frac{1}{12}n_3+\frac{1}{2}\\
&=\frac{1}{3}d+\frac{7}{24}n_1+\frac{1}{4}n_2-\frac{1}{12}n_3+\frac{1}{2}n_4+\frac{1}{2}\\if~d\geq 13.
\end{align*}

For each $8\leq d\leq 12$, we consider the following program $\mathcal{P}_d$:
\begin{align*}
\max~&\gamma_d\\
{\rm s.t.~} &2n_1+2n_2+n_3+3n_4+n_5\leq d\\
& n_1,n_2,n_3,n_4,n_5,d\in \mathbb{Z}^+.
\end{align*}

Let $q_d$ be the optimal value of the program $\mathcal{P}_d$.

Since $\gamma_8\leq \frac{1}{4}(2n_1+2n_2+n_3+3n_4+n_5)\leq 2$, $q_8\leq 2$.

Since $\gamma_9\leq \frac{3}{8}(2n_1+2n_2+n_3+3n_4+n_5)-\frac{3}{8}(n_3+n_4+n_5)\leq 3$, $q_9\leq 3$. Note that if $2n_1+2n_2+n_3+3n_4+n_5=9$, then $n_3+n_4+n_5\geq 1$.

Since $\gamma_{10}\leq \frac{3}{8}(2n_1+2n_2+n_3+3n_4+n_5)\leq \frac{15}{4}$, $q_{10}\leq \frac{15}{4}$.

Since $\gamma_{11}\leq \frac{11}{24}(2n_1+2n_2+n_3+3n_4+n_5)-\frac{1}{8}(n_2+n_3+n_4+n_5)\leq \frac{59}{12}$, $q_{11}\leq \frac{59}{12}$. Note that if $2n_1+2n_2+n_3+3n_4+n_5=11$, then $n_2+n_3+n_4+n_5\geq 1$.

Since $\gamma_{12}\leq 4+\frac{1}{6}(2n_1+2n_2+n_3+3n_4+n_5)\leq 6$, $q_{12}\leq 6$.

Therefore, $c'(v)\geq d-6-q_d\geq 0$ for each $8\leq d\leq 12$.

If $d\geq 13$, then $2n_1+2n_2+n_3+3n_4+n_5\leq d$ implies $\gamma_d-(d-6)\leq \frac{1}{6}(2n_1+2n_2+n_3+3n_4+n_5)-\frac{2}{3}d+\frac{13}{2}\leq \frac{13-d}{2}\leq 0$.
Therefore, $c'(v)\geq d-6-\gamma_d\geq 0$ for $d\geq 13$.


\begin{thebibliography}{9}\setlength{\itemsep}{0pt}

\bibitem{Albertson.2006} M. O. Albertson, B. Mohar. Coloring vertices and faces of locally planar graphs. Graphs and Combinatorics, 22, (2006), 289--295.

\bibitem{Behzad.1965} M. Behzad. Graphs and their chromatic numbers. Doctoral thesis, Michigan State University, 1965.

\bibitem{Bondy} J. A. Bondy, U. S. R. Murty. Graph Theory with Applications. North-Holland, New York, 1976.

\bibitem{Borodin.1984} O. V. Borodin. Solution of Ringel's problems on the vertex-face coloring of plane graphs and on the coloring of $1$-planar graphs. Diskret. Analiz, 41: 12--26, 1984.

\bibitem{Borodin.1989} O. V. Borodin. On the total coloring of planar graphs. J. Reine Angew. Math., 394: 180--185, 1989.

\bibitem{Borodin.1995} O. V. Borodin. A New Proof of the $6$-Color Theorem. Journal of Graph Theory, 19(4): 507--521, 1995.



\bibitem{Borodin.2001} O. V. Borodin, A. V. Kostochka, A. Raspaud, E.Sopena. Acyclic colouring of 1-planar graphs. Discrete Applied Mathematics, 114: 29--41, 2001.







\bibitem{Kostochka.1977} A .V. Kostochka. The total coloring of a multigraph with maximal degree 4. Discrete Mathematics, 17: 161--163, 1977.

\bibitem{Kostochka.1996} A. V. Kostochka. The total chromatic number of any multigraph with maximum degree five
is at most seven. Discrete Mathematics, 162: 199--214, 1996.


\bibitem{Ringel.1965} G. Ringel. Ein Sechsfarbenproblem auf der Kugel. Abh. Math. Sem. Univ. Hamburg, 29: 107--117, 1965.

\bibitem{Rosenfeld.1971} M. Rosenfeld. On the total coloring of certain graphs. Israel J. Math., 9: 396--402, 1971.

\bibitem{Sanders.1999} D. P. Sanders, Y. Zhao. On total 9-coloring planar graphs of maximum degree seven. J. Graph Theory, 31: 67--73, 1999.



\bibitem{Vijayaditya.1971} N. Vijayaditya. On total chromatic number of a graph. J. London Math. Soc., 3(2): 405--408, 1971.

\bibitem{Vizing.1968} V. Vizing. Some unsolved problems in graph theory. Uspekhi Mat. Nauk, 23: 117--134, 1968.

Theory, 54: 91--102, 2007.

\bibitem{WL08} W. Wang, K.-W. Lih. Coupled choosability of plane graphs. J. Graph Theory, 58: 27--44, 2008.


\bibitem{Yap.1989} H. P. Yap. Total colorings of graphs. Bull London Math. Soc., 21: 159-163, 1989.

\bibitem{Zhang.C5} X. Zhang, G. Liu. On edge colorings of 1-planar graphs without chordal 5-cycles. Ars Combin., 104: 431--436, 2012.

\bibitem{Zhang.C4} X. Zhang, G. Liu. On edge colorings of 1-planar graphs without adjacent triangles. Information Processing Letters, 112(4): 138--142, 2012.

\bibitem{Zhang.2010.SDU} X. Zhang, G. Liu, J.-L. Wu. Edge coloring of triangle-free 1-planar graphs. Journal of Shandong
University (Natural Science), 45(6): 15--17, 2010.

\bibitem{Zhang.LA} X. Zhang, G. Liu, J.-L. Wu. On the linear arboricity of 1-planar graphs. OR Transactions, 15(3): 38-44, 2011.

\bibitem{Zhang.BKMS} X. Zhang, G. Liu, J.-L. Wu. $(1,\lambda)$-embedded graphs and acyclic edge choosability. Bulletin of the Korean Mathematical Society, 49(3): 573--580, 2012.

\bibitem{Zhang.2011} X. Zhang, J.-L. Wu. On edge colorings of 1-planar graphs. Information Processing Letters, 111(3): 124--128, 2011.

\bibitem{list} X. Zhang, J.-L. Wu, G. Liu. List edge and list total coloring of 1-planar graphs. Front. Math. China, 7(5): 1005--1018, 2012.

\bibitem{Zhang.CEJM} X. Zhang, Y. Yu, G. Liu, On $(p,1)$-total labelling of 1-planar graphs. Central European Journal of Mathematics, 9(6): 1424-1434, 2011.

\end{thebibliography}
\end{document}